\newtheorem{theorem}{Theorem}
\newtheorem{lemma}[theorem]{Lemma}
\newtheorem{problem}[theorem]{Problem}
\begin{document}

\title{\LARGE Regular graphs with equal matching number and independence number\thanks{Supported by the National Natural
Science Foundation of China under grant No.11471257 and Fundamental Research Funds for the Central Universities}}

\author{ {Hongliang Lu \footnote{Corresponding email: luhongliang215@sina.com (H. Lu)}, Zixuan Yang}\\
\small School of Mathematics and Statistics, Xi¡¯an Jiaotong University \\ \small  Xi'an, Shaanxi 710049, P.R.China \\ }

\date{}

\maketitle {\small {\bfseries \centerline{Abstract}}

\vspace{3ex}

Let $r\geq 3$ be an integer and $G$ be a graph. Let $\delta(G), \Delta(G)$, $\alpha(G)$ and $\mu(G)$ denotes minimum degree, maximum degree, independence number and matching number of $G$, respectively. Recently, Caro, Davila and Pepper proved $\delta(G)\alpha(G)\leq \Delta(G)\mu(G)$. Mohr and Rautenbach characterized the extremal graphs for non-regular graphs and 3-regular graphs. In this note, we characterize the extremal graphs for all $r$-regular graphs in term of Gallai-Edmonds Structure Theorem, which extends Mohr and Rautenbach's result.

\vspace{3ex}
{\bfseries \noindent Keywords}: Independence number; matching number; regular graphs\\
{\bfseries \noindent 2010 Mathematical Subject Classification}: 05C69


\section {\large Introduction}

\noindent In this paper, we consider finite undirected graphs without loops. Let $G$ be a graph with vertex set $V(G)$ and edge set $E(G)$. The number of vertices of $G$ is called its \emph{order} and denoted by $|V(G)|$. On the other hand, the number of edges in $G$ is called its \emph{size} and denoted by $e(G)$. For a vertex $u$ of a graph $G$, the \emph{degree} of $u$ in $G$ is denoted by $d_G(u)$, and the minimum and maximum vertex degrees of $G$ will be denoted $\delta(G)$ and $\Delta(G)$, respectively.
The set of vertices adjacent to $u$ in $G$ is denoted by $N_{G}(u)$. For $S\subseteq V(G)$, the subgraph of $G$ induced by $S$ is denoted by $G[S]$. For two disjoint subsets $S,T\subseteq V(G)$, let $E_{G}(S,T)$ denote the set of edges of $G$ joining $S$ to $T$ and let $e_{G}(S,T)=|E_{G}(S,T)|$. A component is \emph{trivial} if it has no edges; otherwise it is \emph{nontrivial}.

A \emph{matching} of a graph is a set of edges such that no two edges share a vertex in common. For a matching $M$, a vertex $u$ of $G$ is called \emph{saturated} by $M$ if $u$ is incident to an edge of $M$. A matching $M$ is a \emph{maximum matching} of $G$ if there does not exist a matching $M'$ of $G$ such that $|M'|>|M|$. A \emph{perfect matching} of a graph is a matching saturating all vertices. The cardinality of a maximum matching is called the \emph{matching number} of $G$ and is denoted by $\mu(G)$.
An \emph{independent set} is a set of vertices in a graph, no two of which are adjacent. A \emph{maximum independent set} is an independent set of largest possible size for a given graph $G$. The cardinality of a maximum independent set is called the \emph{independence number} of $G$ and is denoted by $\alpha(G)$.

There are many relationships between the graph parameters $\alpha(G)$ and $\mu(G)$. It is known that $\lfloor\frac{n}{2}\rfloor+1\leq \alpha(G)+\mu(G)\leq |V(G)|\leq \alpha(G)+2\mu(G)$ holds for every graph $G$. If $\alpha(G)+\mu(G)=|V(G)|$, then $G$ is called \emph{K{\"o}nig-Egerv\'{a}ry graph} \cite{Dem,Ste}. It is easy to see that if $G$ is a K\H{o}nig-Egerv\'{a}ry graph, then $\alpha(G)\geq\mu(G)$. The K{\"o}nig-Egerv\'{a}ry graph have been extensively studied in \cite{Bon,Bou,Lev,Lov}.

Recently, Levit et al.\cite{Lev2} showed that $\alpha(G)\leq\mu(G)$ under the condition that $G$ contains an unique odd cycle. Caro, Davial and Pepper \cite{Car} obtained the following results.


\begin{theorem}[Caro, Davial and Pepper,  \cite{Car}] If $G$ is a graph, then
$$\delta(G)\alpha(G)\leq \Delta(G)\mu(G),$$
and this bound is sharp.
\end{theorem}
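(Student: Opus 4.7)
The plan is to convert the degree bound underlying $\delta(G)\alpha(G)$ into an edge count between a maximum independent set and its complement, and then squeeze that edge count from above by $\Delta(G)\mu(G)$ via a bipartite argument using K\"onig's theorem.

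First I would fix a maximum independent set $I$ of $G$, so $|I|=\alpha(G)$. Since $I$ is independent, every edge of $G$ incident to a vertex of $I$ has its other endpoint in $V(G)\setminus I$. Summing the degrees of the vertices in $I$ therefore yields
$$\delta(G)\alpha(G)\;\leq\;\sum_{v\in I}d_G(v)\;=\;e_G(I,V(G)\setminus I).$$

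Next I would focus on the bipartite spanning subgraph $H$ of $G$ with bipartition $(I,V(G)\setminus I)$ whose edges are exactly $E_G(I,V(G)\setminus I)$. Trivially $\Delta(H)\leq\Delta(G)$ and $\mu(H)\leq\mu(G)$, because every matching of $H$ is also a matching of $G$. The key step is the bipartite inequality $|E(H)|\leq\Delta(H)\mu(H)$: by K\"onig's theorem, $\mu(H)=\tau(H)$, and for \emph{any} vertex cover $C$ of $H$ each edge is covered by some vertex of $C$, so $|E(H)|\leq\sum_{v\in C}d_H(v)\leq\Delta(H)|C|$; minimizing over $C$ gives $|E(H)|\leq\Delta(H)\tau(H)=\Delta(H)\mu(H)$. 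Chaining everything,
$$\delta(G)\alpha(G)\;\leq\;e_G(I,V(G)\setminus I)\;=\;|E(H)|\;\leq\;\Delta(H)\mu(H)\;\leq\;\Delta(G)\mu(G),$$
which is the desired bound.

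I do not expect any substantial obstacle in this plan: the proof is essentially a degree sum followed by one invocation of K\"onig's theorem. The only point requiring a moment of care is recognizing that one should not try to bound $|E_G(I,V(G)\setminus I)|$ using a matching in $G$ directly (the natural guess $|N(I)|\leq\mu(G)$ already fails for $C_5$), but instead pass to the bipartite subgraph $H$, where the global tools $\mu=\tau$ and $|E|\leq\Delta\tau$ become available. Sharpness is immediate from any disjoint union of copies of $K_{r,r}$, where $\delta=\Delta=r$ and $\alpha=\mu$.
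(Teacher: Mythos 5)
Your proof is correct: the degree-sum lower bound $\delta(G)\alpha(G)\le e_G(I,V(G)\setminus I)$ and the upper bound $|E(H)|\le\Delta(H)\tau(H)=\Delta(H)\mu(H)$ via K\"onig's theorem on the bipartite subgraph $H$ chain together without a gap, and $K_{r,r}$ settles sharpness. The paper itself states this theorem as an imported result from Caro, Davila and Pepper without reproducing a proof, so there is nothing to compare against; your argument is essentially the standard one and is complete as written.
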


\begin{theorem}[Caro, Davial and Pepper, \cite{Car} ]\label{I-M-bound} If $G$ is a r-regular graph with $r>0$, then
$$\alpha(G)\leq \mu(G).$$
\end{theorem}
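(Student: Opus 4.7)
The statement is an immediate corollary of Theorem~1. Indeed, if $G$ is $r$-regular with $r>0$, then $\delta(G)=\Delta(G)=r$, so Theorem~1 yields
\[
r\,\alpha(G)=\delta(G)\,\alpha(G)\;\leq\;\Delta(G)\,\mu(G)=r\,\mu(G),
\]
and dividing through by the positive integer $r$ gives $\alpha(G)\leq\mu(G)$. This is the whole content of the deduction; no further structure theory is needed.

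For a self-contained argument that does not invoke Theorem~1, I would proceed through Hall's theorem. Let $S$ be a maximum independent set of $G$, and consider the bipartite subgraph $H$ with parts $S$ and $V(G)\setminus S$ whose edges are exactly the edges of $G$ between these two sets. Because $S$ is independent, every edge of $G$ incident to a vertex of $S$ appears in $H$; hence every vertex of $S$ has degree exactly $r$ in $H$, while each vertex of $V(G)\setminus S$ has degree at most $r$ in $H$. To verify Hall's condition, take any $T\subseteq S$: the edges leaving $T$ number $r|T|$, all lie inside $N_H(T)$, and each vertex of $N_H(T)$ can absorb at most $r$ of them, so $r|T|\leq r|N_H(T)|$, giving $|N_H(T)|\geq|T|$. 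By Hall's theorem, $H$ has a matching saturating $S$, and this is a matching of $G$ of size $|S|=\alpha(G)$, so $\mu(G)\geq\alpha(G)$.

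There is no real obstacle here: the regularity hypothesis collapses the gap $\Delta(G)/\delta(G)$ in Theorem~1 to $1$, and the only delicate point is that $r>0$ must be assumed in order to cancel $r$ (equivalently, to ensure Hall's neighbourhood estimate is nontrivial). The substantive difficulty of the paper lies not in re-deriving this inequality but in the subsequent task of characterising the $r$-regular graphs that achieve equality $\alpha(G)=\mu(G)$, which is where the Gallai--Edmonds decomposition will enter.
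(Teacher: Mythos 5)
Your proposal is correct. Note that the paper itself gives no proof of this statement; it is quoted from Caro, Davila and Pepper, so there is nothing internal to compare against. Your first deduction (specialising Theorem~1 to $\delta=\Delta=r$ and cancelling $r>0$) is exactly the intended one-line corollary. Your self-contained Hall's-theorem argument is also sound: for a maximum independent set $S$, the bipartite graph between $S$ and $V(G)\setminus S$ contains all $r$ edges at each vertex of $S$ and at most $r$ at each vertex outside, so the edge count $r|T|\leq r|N_H(T)|$ verifies Hall's condition and produces a matching saturating $S$. This is in the same spirit as the counting the paper later uses in Case~1 of the proof of Theorem~6 (where regularity forces the complement of a maximum independent set of size $|V(G)|/2$ to be independent), so it fits the paper's toolkit; nothing further is needed.
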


They also proposed the following two open problems.

\begin{problem}[Caro, Davial and Pepper, \cite{Car}]\label{prob1}
Characterize $\alpha(G)=\mu(G)$ whenever $G$ is 3-regular.
\end{problem}

\begin{problem}[Caro, Davial and Pepper, \cite{Car}]\label{prob2}
Characterize all graphs $G$ for which
$\delta(G)\alpha(G)=\Delta(G)\mu(G)$.
\end{problem}

Mohr and Rautenbach \cite{Moh}  characterized the non-regular extremal graphs as well as 3-regular graphs, which solved Problems \ref{prob1} and \ref{prob2}. In the  note, we characterize $r$-regular graphs $G$ with $\alpha(G)=\mu(G)$ in term of Gallai-Edmonds Structure Theorem.

Now we firstly introduce Gallai-Edmonds Structure Theorem \cite{Yu}. For a graph $G$, denote by $D_G$ the set of all vertices in $G$ which are not saturated by at least one maximum matching of $G$. Let $A_G$ be the neighbor set of $D_G$, i.e., the set of vertices in $V(G)-D_G$ adjacent to at least one vertex in $D_G$. Finally let $C_G= V(G)-D_G-A_G$. Clearly, this partition is well-defined for every graph and dose not rely on the choices of maximum matchings. A graph $G$ is said to be \emph{factor-critical} if $G-v$ has a perfect matching for any vertex $v\in V(G)$. A matching is said to be a \emph{near-perfect matching} if it covers all vertices but one. For a bipartite graph $H=(A,B)$, the set $A$ with \emph{positive surplus} if $|N_{H}(X)|>|X|$ for every non-empty subset $X$ of $A$. The subgraph of $G$ induced by a vertex subset $S$ is denoted by $G[S]$.

\begin{theorem}[Gallai-Edmonds Structure Theorem, see \cite{Yu}]\label{GE-thm}Let $G$
be a graph and let $D_G$, $C_G$ and $A_G$ be the vertex-partition defined above. Then
\begin{itemize}
\item [\rm{(i)}]the component of the subgraph induced by $D_G$ are factor-critical;
\item [\rm{(ii)}]the subgraph induced by $C_G$ has a perfect matching;
\item [\rm{(iii)}]if $M$ is any maximum matching of $G$, it contains a near-perfect matching of each component of $D_G$, a perfect matching of each component of $C_G$ and matches all vertices of $A_G$ with vertices in distinct component of $D_G$;
\item [\rm{(iv)}]the bipartite graph obtained from $G$ by deleting the vertices of $C_G$ and the edges spanned by $A_G$ and by contracting each component of $D_G$ to a single vertex has positive surplus (as viewed from $A_G$);
\item [\rm{(v)}]$E_G(C_G,D_G)=\emptyset$.
\end{itemize}
\end{theorem}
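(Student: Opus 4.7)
The plan is to prove the Gallai--Edmonds Structure Theorem by combining Berge's augmenting-path criterion for maximum matchings with the key auxiliary result known as \emph{Gallai's lemma}: if $G$ is a connected graph in which every vertex is missed by some maximum matching, then $G$ is factor-critical. I would prove this lemma first, by induction on $|V(G)|$: pick a vertex $v$ missed by a maximum matching $M$, pick a neighbor $u$ of $v$, and analyze a maximum matching $M'$ that misses $u$; the symmetric difference $M \triangle M'$ together with the edge $uv$ must yield an augmenting path for one of $M, M'$ unless $G-u$ has a perfect matching, and the induction hypothesis applied to appropriate subgraphs then forces factor-criticality.

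With Gallai's lemma available, I would attack the five conclusions in the order (v), (i), (ii), (iii), (iv). For (v), if a vertex $v \in D_G$ were adjacent to $w \in V(G)\setminus D_G$, then by definition $w$ would neighbor a vertex of $D_G$ and thus lie in $A_G$, not $C_G$; hence $E_G(C_G, D_G)=\emptyset$. For (i), each component $K$ of $G[D_G]$ inherits the property that every one of its vertices is missed by some maximum matching of the whole graph $G$; after peeling off the $A_G$-to-$K$ matching edge, the restriction shows every vertex of $K$ is missed by some maximum matching of $K$, so Gallai's lemma declares $K$ factor-critical. For (ii), since no vertex of $C_G$ is ever missed, one can delete any $v \in C_G$ and its matched partner from any maximum matching $M$ to get a matching on $C_G \setminus \{v, \text{partner}\}$; an inductive/augmenting-path argument then produces a perfect matching of $G[C_G]$.

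For (iii), I would use (i), (ii), (v) structurally: any maximum matching $M$ must saturate all of $A_G$ (otherwise we could extend it) and by (v) cannot match $A_G$ into $C_G$, so $A_G$ is matched into $D_G$; by factor-criticality each component of $D_G$ contributes a near-perfect matching; and (ii) forces a perfect matching inside $C_G$. For (iv), the positive-surplus condition on $A_G$ in the contracted bipartite graph reduces to a Hall-type defect argument: a subset $X \subseteq A_G$ with $|N(X)| \leq |X|$ in the contracted graph would allow one to leave $|X| - |N(X)| + 1 \geq 1$ additional $D_G$-vertices unmatched beyond what Theorem 6's bookkeeping allows, contradicting the maximality of $M$.

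The main obstacle is Gallai's lemma; it is the combinatorial heart of the theorem and is the only nontrivial step, requiring a careful augmenting-path exchange between two maximum matchings $M$ and $M'$ that miss different vertices. The subtlety is ruling out that the symmetric difference $M \triangle M'$ contains an augmenting path, which would contradict the maximality of one of them, and then using this to patch $G-v$ into a perfect matching. Once this lemma is in hand, the rest of the theorem is essentially an organized unpacking of the definitions of $D_G$, $A_G$, $C_G$ together with Berge's criterion, and should proceed without further serious difficulty.
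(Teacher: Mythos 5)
First, a point of reference: the paper does not prove this statement at all. It is the classical Gallai--Edmonds Structure Theorem, quoted verbatim with a citation to Yu and Liu (the standard source is Lov\'asz--Plummer, Matching Theory), so there is no in-paper argument to compare against; your proposal has to stand on its own. Your overall plan --- Gallai's lemma plus Berge's augmenting-path criterion, with (v) read off from the definitions of $A_G$ and $C_G$ --- is indeed the classical route, and parts (v), (ii), (iii), (iv) are routine once (i) is secured.

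The genuine gap is in your derivation of (i). You assert that each component $K$ of $G[D_G]$ ``inherits'' the property that every one of its vertices is missed by some maximum matching \emph{of $K$}, by ``peeling off the $A_G$-to-$K$ matching edge.'' But the hypothesis of Gallai's lemma concerns maximum matchings of the subgraph $K$ itself, whereas the definition of $D_G$ only supplies maximum matchings of $G$; the restriction of a maximum matching of $G$ to $K$ need not be a maximum matching of $K$, and you cannot yet invoke (iii) to control how a maximum matching meets $K$, because your proof of (iii) rests on (i) --- as written the argument is circular. The standard way to break the circle is the Stability Lemma: for $a\in A_G$ one shows $D_{G-a}=D_G$, $A_{G-a}=A_G\setminus\{a\}$, $C_{G-a}=C_G$ and $\mu(G-a)=\mu(G)-1$, via an alternating-path exchange; iterating over all of $A_G$ (and discarding $C_G$, which is legitimate only after one knows no maximum matching joins $C_G$ to $A_G$ in the reduced graphs) leaves a graph whose components are exactly the components of $G[D_G]$ and in which every vertex genuinely is missed by some maximum matching, so Gallai's lemma applies. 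Without the Stability Lemma, or an equivalent deficiency computation via the Berge--Tutte formula exhibiting $A_G$ as a maximum-deficiency barrier, the step from ``missed by a maximum matching of $G$'' to ``missed by a maximum matching of $K$'' does not go through. Your sketch of Gallai's lemma itself is also looser than it should be --- the clean argument takes a maximum matching missing two vertices $u,v$ chosen at minimum distance and derives a contradiction through an intermediate vertex and a symmetric-difference path --- but that is repairable; the absent Stability Lemma is the substantive hole in the plan.
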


The partition $(D_G,A_G,C_G)$ is called a \emph{canonical decomposition}. When there are no confusions, we also denote $G[D_G]$, $G[A_G]$ and $G[C_G]$ by $D_G, A_G$ and $C_G$, respectively.
For a maximum matching $M$ and a  component of $ D_i$ of $D_G$, we say that $D_i$ is \emph{M-full} if some vertex of $D_i$ is matched with a vertex in $A_G$, otherwise, $D_i$ is \emph{M-near full}.


Let $G$ be an $r$-regular graph without perfect matching.  
A connected component $D_i$  of $D_G$ is called ``\emph{good}" if $D_i$ is a non-trivial connected component  and satisfies the following two properties:
\begin{itemize}
  \item [(i)] $\alpha(D_i)=(|V(D_i)|-1)/2$;
  \item [(ii)] $D_i$ contains a maximum independent set $I(D_i)$ such that $E_G(I(D_i),A_G)=\emptyset$.
\end{itemize}

%
%



In this note, we character the extremal graphs for all $r$-regular graphs and obtain the following results.

\begin{theorem} Let $G$ be a connected $r$-regular graph.  Then $\alpha(G)=\mu(G)$ if and only if $G$ is bipartite or $(D_G,A_G,C_G)$ satisfies that
\begin{itemize}
\item [\rm{(i)}]$C_G=\emptyset$,
\item[\rm{(ii)}]$A_G\subseteq I(G)$ for any maximum independent set of $G$,
\item[\rm{(iii)}]every nontrivial component of $D_G$ is good.
\end{itemize}
\end{theorem}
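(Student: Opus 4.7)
The plan is to exploit the Gallai--Edmonds decomposition together with $r$-regularity in both directions. For sufficiency, the bipartite case is immediate: an $r$-regular bipartite graph has a perfect matching by Hall's theorem, and K\"onig's theorem gives $\alpha(G)=\mu(G)=|V(G)|/2$. When (i)--(iii) hold, write $d$ for the number of components of $G[D_G]$; since $C_G=\emptyset$, Gallai--Edmonds yields $\mu(G)=|A_G|+(|D_G|-d)/2$. Then $I^{*}:=A_G\cup\bigcup_{D_j\text{ nontrivial}}I(D_j)$ is independent (by (ii) $A_G$ is independent; by (iii) each $I(D_j)$ avoids $N(A_G)$; distinct $D_j$ lie in different components of $G[D_G]$), and $|I^{*}|=|A_G|+(|D_G|-d)/2=\mu(G)$, so $\alpha(G)\geq\mu(G)$; Theorem~\ref{I-M-bound} gives equality.

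For necessity, suppose $\alpha(G)=\mu(G)$. If $G$ has a perfect matching then $\mu(G)=n/2=\alpha(G)$; for any independent set $I$ of size $n/2$, each of the $rn/2$ edges of $G$ has at least one endpoint in $V(G)\setminus I$, forcing $rn/2\leq r|V(G)\setminus I|=rn/2$, so every edge has exactly one endpoint in $V(G)\setminus I$, i.e., $V(G)\setminus I$ is independent and $G$ is bipartite. Otherwise $D_G\neq\emptyset$. Fix a maximum independent set $I$, let $T$ be the union of trivial components of $G[D_G]$, and split $I=I_T\sqcup I_{nt}\sqcup I_A\sqcup I_C$ accordingly. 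Factor-criticality gives $\alpha(D_j)\leq(|V(D_j)|-1)/2$ for each nontrivial $D_j$; the perfect matching on $G[C_G]$ yields $|I_C|\leq |C_G|/2$; and since every $v\in I_T$ has all $r$ neighbors in $A_G\setminus I$, one gets $|I_A|\leq|A_G|-|N_G(I_T)\cap A_G|$. Summing these and using $\mu(G)=|A_G|+|C_G|/2+(|D_G|-d)/2$,
\begin{equation*}
|I|\;\leq\;\mu(G)\;+\;|I_T|\;-\;|N_G(I_T)\cap A_G|.
\end{equation*}
Regularity also gives $r|I_T|=e(I_T,A_G)\leq r|N_G(I_T)\cap A_G|$, so $|N_G(I_T)\cap A_G|\geq|I_T|$. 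Therefore $|I|=\mu(G)$ forces each inequality in sight to be tight.

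The crux is ruling out $I_T\neq\emptyset$. If $I_T\neq\emptyset$, the equality $|I_T|=|N_G(I_T)\cap A_G|$ means every $u\in N_G(I_T)\cap A_G$ has all $r$ neighbors in $I_T$, so the subgraph induced by $I_T\cup(N_G(I_T)\cap A_G)$ is $r$-regular bipartite with no edges to the rest of $G$. Connectedness of $G$ promotes this subgraph to all of $G$, making $G$ bipartite, hence (by Hall) $G$ would have a perfect matching, contradicting $D_G\neq\emptyset$. Hence $I_T=\emptyset$, so $I_A=A_G$, which is (ii); also $I\cap D_j$ is a maximum independent set of $D_j$ of size $(|V(D_j)|-1)/2$ avoiding $A_G$, showing each nontrivial $D_j$ is good, which is (iii). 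For (i), since $A_G\subseteq I$ each $v\in I_C$ lies in $C_G\setminus N(A_G)$ and has all $r$ neighbors in $C_G\setminus I_C$, giving $e(I_C,C_G\setminus I_C)=r|C_G|/2$. Comparing with $2e(G[C_G])=r|C_G|-e(A_G,C_G)$ and using $e(I_C)=0$ forces $e(A_G,C_G)=0$ and $e(C_G\setminus I_C)=0$, after which connectedness of $G$ together with $D_G\neq\emptyset$ forces $C_G=\emptyset$. The main obstacle I anticipate is the $I_T=\emptyset$ step, where a purely local tight equality has to be promoted to a global structural statement via connectedness; everything else reduces to bookkeeping with Gallai--Edmonds and degree counting.
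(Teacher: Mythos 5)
Your proposal is correct and follows essentially the same route as the paper: Gallai--Edmonds counting for both directions, the same edge-count showing $V(G)\setminus I$ is independent in the perfect-matching case, the same regular-bipartite-component contradiction to exclude independent vertices in trivial components of $D_G$ (your $I_T$ is the paper's $B_G'$), and the same degree count on $C_G$ to force $C_G=\emptyset$. The only difference is organizational (you run one integrated tightness argument where the paper isolates a preliminary lemma), so no substantive comparison is needed.
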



\section {\large Proof of Theorem 6}

\noindent Before proving the Theorem 6, we firstly show the following lemma.

\begin{lemma}\label{main-lem} Let $G$ be a connected $r$-regular graph without perfect matching. If $\alpha(G)=\mu(G)$, then
\begin{itemize}
\item [\rm{(i)}]$A_G\subseteq I(G)$ for any maximum independent set of $G$;
\item [\rm{(ii)}]$C_G=\emptyset$.
\end{itemize}
\end{lemma}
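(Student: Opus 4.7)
The plan is to apply the Gallai--Edmonds Structure Theorem (Theorem~\ref{GE-thm}) to obtain an exact formula for $\mu(G)$ and a matching upper bound on $\alpha(G)$, so that the hypothesis $\alpha(G)=\mu(G)$ forces every intermediate inequality to be tight. Let $k$ denote the number of components of $D_G$. By Theorem~\ref{GE-thm},
$$\mu(G)=\frac{|D_G|-k}{2}+|A_G|+\frac{|C_G|}{2}.$$
For a maximum independent set $I$, decompose $I=I_D\cup I_A\cup I_C$ with $I_X=I\cap V(X)$ for $X\in\{D_G,A_G,C_G\}$. The bounds to exploit are $|I_A|\le|A_G|$; $|I_C|\le|C_G|/2$, since $G[C_G]$ has a perfect matching; and, since every component $D_i$ of $D_G$ is factor-critical (hence of odd order), $|I_D\cap V(D_i)|\le(|D_i|-1)/2$ whenever $|D_i|\ge 3$, with $|I_D\cap V(D_i)|\le 1$ if $D_i$ is a singleton.

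For part (i), when $D_G$ has no singleton component, summing the three bounds gives $|I|\le\mu(G)$, and the equality $\alpha(G)=\mu(G)$ forces each bound to be tight, yielding $|I_A|=|A_G|$, i.e., $A_G\subseteq I$. The delicate point is to rule out singleton components inside $I_D$. A singleton $\{w\}\subseteq D_G$ has all $r$ neighbors in $A_G$, since $E_G(D_G,C_G)=\emptyset$ by Theorem~\ref{GE-thm}(v) and $G$ is $r$-regular. Including $w$ in $I_D$ gains one vertex there but forces the $r$ distinct neighbors of $w$ out of $I_A$. A careful slack balance across the three upper bounds, combined with the positive-surplus condition of the Gallai--Edmonds bipartite graph (Theorem~\ref{GE-thm}(iv)) applied to the set of singletons lying in $I_D$, will show that this net effect strictly lowers $|I|$ below $\mu(G)$; so no singleton of $D_G$ is in any maximum independent set and hence $A_G\subseteq I$.

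For part (ii), I argue by contradiction: assume $C_G\ne\emptyset$. Part (i) and $|I|=\mu(G)$ force $|I_C|=|C_G|/2$, so $G[C_G]$ is K\"onig--Egerv\'ary and each edge of its perfect matching $M_C$ joins $I_C$ to $C_G\setminus I_C$. Since $A_G\subseteq I$, we have $I_C\subseteq C_G\setminus N_G(A_G)$. Connectivity of $G$ together with $E_G(D_G,C_G)=\emptyset$ yields an edge $vu$ with $v\in A_G$, $u\in C_G$; its $M_C$-partner $u'$ lies in $I_C$. Using the $r$-regularity of $G$ and the fact that every $v\in A_G$ satisfies $d_v^D+d_v^C=r$ with $d_v^D\ge 2$ by positive surplus, I would construct an augmenting modification of $I$ (starting from removing $v$ and adjusting through $u,u'$, perhaps iteratively along an alternating structure) that produces an independent set of size strictly larger than $\mu(G)$, contradicting Theorem~\ref{I-M-bound}.

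The main obstacle will be the final augmenting construction in part (ii): a single swap of $v$ for $u$ only preserves $|I|$, so the proof must invoke in an essential way the regularity of $G$, the positive-surplus condition from Gallai--Edmonds, and the K\"onig--Egerv\'ary structure of $G[C_G]$ together to genuinely enlarge $I$. I expect this to be the most subtle step, while both the Gallai--Edmonds formula for $\mu(G)$ and the singleton-component analysis in part (i) should follow by routine counting once set up correctly.
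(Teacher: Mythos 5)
Your setup — the Gallai--Edmonds formula for $\mu(G)$, the decomposition of a maximum independent set $I$ over $D_G$, $A_G$, $C_G$, and the observation that singleton components of $D_G$ are the only obstruction in part (i) — matches the paper's. But both of the steps you flag as delicate are left as plans rather than proofs, and the tools you propose to close them do not work as stated. In part (i), the positive-surplus condition of Theorem~\ref{GE-thm}(iv) is \emph{as viewed from $A_G$}: it says $|N_H(X)|>|X|$ for nonempty $X\subseteq A_G$, where neighbors are counted among contracted $D$-components. Applying it ``to the set of singletons lying in $I_D$'' is the reverse direction and gives no lower bound on $|N_G(S)\cap A_G|$ for a set $S$ of singletons. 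What is actually needed is $|N_G(S)|>|S|$, and the paper gets this from regularity: $r|S|=e_G(S,N_G(S))\le r|N_G(S)|$, with equality forcing $G[S\cup N_G(S)]$ to be a connected $r$-regular bipartite component, hence all of $G$, hence a graph with a perfect matching — contradicting the hypothesis. Your sketch never uses the no-perfect-matching hypothesis in part (i), yet it is indispensable precisely to kill this equality case; without it the ``careful slack balance'' you promise cannot produce a strict inequality.

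In part (ii) you explicitly defer the key step (``I would construct an augmenting modification\dots I expect this to be the most subtle step''), so nothing is proved. Moreover the augmenting-set strategy is aimed at the wrong contradiction: the paper does not enlarge $I$ past $\mu(G)$; it double-counts. Since $A_G\subseteq I$ and $E_G(D_G,C_G)=\emptyset$, every vertex of $I\cap C_G$ has all $r$ of its neighbors in $C_G\setminus I$, so $r|I\cap C_G|\le e_G(I\cap C_G,\,C_G\setminus I)\le r|C_G\setminus I|$; both outer quantities equal $\tfrac{1}{2}r|C_G|$, so every vertex of $C_G\setminus I$ also has all its neighbors in $I\cap C_G$, whence $E_G(A_G,C_G)=\emptyset$. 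Combined with $E_G(D_G,C_G)=\emptyset$ and $D_G\ne\emptyset$, this makes $C_G$ a union of components of $G$, contradicting connectivity. The contradiction is with connectedness, not with $\alpha(G)\le\mu(G)$, and no alternating structure in $G[C_G]$ is required. As submitted, the proposal is an outline with two genuine gaps at exactly the points where the argument has content.
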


\begin{proof}
Firstly, we show (i). Let $I(G)$ be an arbitrary maximum independent set of $G$, let $A_G'=I(G)\cap A_G$ and $B_G'=I(G)\cap B_G$, where $B_G\subseteq D_G$ denotes the set of isolated vertices of $D_G$. Let $q$ denote the number  of   connected components of $D_G$. Let $D_i$ denote  the    connected component  of $D_G$ for $1\leq i\leq q$.  By Theorem \ref{GE-thm} (iii), we have
\begin{align*}
\mu(G)&=\mu( C_G)+|A_G|+\mu(D_G)\\
&=\frac{1}{2}|C_G|+|A_G|+\frac{1}{2}\sum_{i=1}^{q}(|D_i|-1)£¬
\end{align*}
i.e.,
\begin{align}\label{muG}
\mu(G)=\frac{1}{2}|C_G|+|A_G|+\frac{1}{2}\sum_{i=1}^{q}(|D_i|-1)£¬
\end{align}
Since $D_i$ is factor-critical, we have $\alpha(D_i)\leq (|D_i|-1)/2$.
Thus we have
$$|I(G)\cap D_G|\leq \frac{1}{2}\sum_{i=1}^{q}(|D_i|-1).$$
By Theorem \ref{GE-thm} (ii), $C_G$ has a perfect matching. Thus we infer that
$$\alpha(C_G)\leq \frac{1}{2}|C_G|.$$
Hence,
\begin{align*}
\alpha(G)=|I(G)|&=|I(G)\cap C_G|+|I(G)\cap A_G|+|I(G)\cap D_G|\\
&\leq\alpha( C_G)+|A_G'|+|B_G'|+\alpha(D_G-B_G)\\
&\leq\frac{1}{2}|C_G|+|A_G'|+|B_G'|+\frac{1}{2}\sum_{i=1}^{q}(|D_i|-1),
\end{align*}
i.e.,
\begin{align}\label{inde(G)}
\alpha(G) \leq \frac{1}{2}|C_G|+|A_G'|+|B_G'|+\frac{1}{2}\sum_{i=1}^{q}(|D_i|-1),
\end{align}

\vspace{2mm}\noindent{\bf Claim 1.}~$B'(G)=\emptyset$.
\bigskip

By contradiction. Suppose that $B_G'\neq \emptyset$. Note that $\alpha(G)=\mu(G)$. Combining (\ref{muG}) and (\ref{inde(G)}), we have
\begin{align}\label{eq1}
|A_G|=|A_G'|+|B_G'|.
\end{align}
Since $G$ is an regular graph and $B_G'$ is an indepednet set, we have $|N_{G}(B_G')|\geq |B_G'|$ with equality if and only if
$G[N_{G}(B_G')\cup B_G']$ is a connected component of $G$ and $N_{G}(B_G')$ is also an independent set. Note that $G$ is connected. So if $|N_{G}(B_G')|= |B_G'|$, then $V(G)=B_G'\cup N_{G}(B_G')$ and $G$ is an $r$-regular bipartite graph, which implies that $G$ has a perfect matching by Hall's Theorem, a contradiction. Thus we may assume that $|N_{G}(B_G')|> |B_G'|$. Since $A_G'\cup B_G'$ is an independent set,  we have $A_G'\subseteq A_G-N_{G}(B_G')$. Thus
\[
|A_G'|+|B_G'|\leq |A_G-N_{G}(B_G')|+|B_G'|<|A_G|,
\]
contradicting to (\ref{eq1}). This completes the proof of claim 1.

%


\medskip

By Claim 1, $|A_G|=|A_G'|$, then we have $A_G=A_G'\subseteq I(G).$ This completes the proof of (i).

\vspace{2ex}

Next we show (ii).  Suppose that the result does not hold. Since $\alpha(G)=\mu(G)$,  by (\ref{muG}) and (\ref{inde(G)}), we have $$|I(G)\cap C_G|=\alpha( C_G)=\mu( C_G)=\frac{1}{2}|C_G|.$$
Recall that $A_G\subseteq I(G)$. One can see that $E_G(A_G, I(G)\cap C_G)=\emptyset$. Since $G$ is $r$-regular, we have
\[
\frac{1}{2}r|C_G|=r|I(G)\cap C_G|\leq e_G(I(G)\cap C_G, C_G-(I(G)\cap C_G))\leq r|C_G-(I(G)\cap C_G)|=\frac{1}{2}r|C_G|,
\]
which implies $E_G(A_G, C_G-(I(G)\cap C_G))=\emptyset$. Thus we have $E_G(A_G,C_G)=\emptyset$. Note that $E_G(D_G,C_G)=\emptyset$ by Theorem \ref{GE-thm} (v). On the other hand, since $G$ contains no perfect matchings, one can see that $D_G\neq\emptyset$ by definition of $D_G$.  Since $G$ is  connected, we may infer that $C_G=\emptyset$. This completes the proof of Lemma \ref{main-lem}.
%
%
%
%
\end{proof}





\begin{proof}[\textbf{Proof of the Theorem 6.}]~
Firstly, we consider sufficiency.
Let $G$ be an $r$-regular bipartite graph with bipartition $(A,B)$. One can see that $|A|=|B|$ and $\alpha(G)=\frac{|V(G)|}{2}.$ By Hall's  Theorem, $G$ has a perfect matching, i.e., $\mu(G)=\frac{|V(G)|}{2}.$ Therefore, $\alpha(G)=\mu(G)$.

Now we may assume that $G$ is a regular graph and satisfies the following three conditions
\begin{itemize}
  \item [(i)]$C_G=\emptyset$,
\item[(ii)]$A_G\subseteq I(G)$ for any maximum independent set of $G$,
\item[(iii)]every nontrivial component of $D_G$ is good.
\end{itemize}
Let $q$ denote the number of  connected components of $D_G$ and let $D_i$ denote the connected component of $D_G$ for $1\leq i\leq q$. By Theorem \ref{GE-thm} (iii),we have
\begin{align}\label{main-m-number}
\mu(G)&=|A_G|+\frac{1}{2}\sum_{i=1}^{q}(|D_i|-1).
\end{align}
Since $D_i$ is good for $1\leq i\leq q$, then $D_i$ contains an independent set $I(D_i)$ such that
\begin{align*}
|I(D_i)|=(|V(D_i)|-1)/2\ \mbox{and } E_G(I(D_i),A_G)=\emptyset.
\end{align*}
When $D_i$ is an isolated vertex, $I(D_i)=\emptyset$. So $I(G)=A_G\bigcup \cup_{i=1}^qI(D_i)$ is an independent set of $G$. Note that
\begin{align}\label{main-I-number}
|I(G)|=|A_G|+\frac{1}{2}\sum_{i=1}^q(|D_i|-1).
\end{align}
Since $I(G)$ is a  maximum independent set, combining (\ref{main-m-number}) and (\ref{main-I-number}), one can see that
\[
\mu(G)=|I(G)|= \alpha(G).
\]

%
%

\vspace{2ex}
Next, we prove the necessity. Let $G$ be an $r$-regular graph with $\alpha(G)=\mu(G)$. Let $I(G)$ be a maximum independent set of $G$. We discuss two cases.

\vspace{2mm}\textbf{Case 1.}~$G$ has a perfect matching.
\vspace{2mm}

Note that
$$\mu(G)=\frac{1}{2}|V(G)|=\alpha(G)$$
and
\begin{equation}
\begin{aligned}
|I(G)|=\frac{|V(G)|}{2}=|V(G)-I(G)|.
\end{aligned}
\end{equation}
One can see that
\begin{align*}
e_{G}(I(G),V(G)-I(G))=\alpha(G)r=\frac{|V(G)|}{2}r.
\end{align*}
It follows   that $V(G)-I(G)$ is an independent set and $G$ is an $r$-regular bipartite graph.

\vspace{2mm}\textbf{Case 2.}~$G$ has no perfect matching.
\vspace{2mm}

By Lemma \ref{main-lem},   $C_G=\emptyset$ and $A_G\subseteq I(G)$. Let $B_G$ denote the set of isolated vertices of $D_G$. Since $G$ is connected, then for every $x\in B_G$, $E_G(\{x\},A_G)\neq \emptyset$. So we have $B_G\cap I(G)=\emptyset$.
So it is sufficient for us to show that  every nontrivial component $D_i$ of $D_G$  is good.
 Since $D_i$ is factor-critical, we have $\alpha(D_i)\leq \frac{1}{2}(|D_i|-1)$. Recall that $A_G\subseteq I(G)$. Then we have
\[
\alpha(G)=|I(G)|=|A_G|+\sum_{i=1}^p |V(D_i)\cap I(G)|\leq |A_G|+\frac{1}{2}\sum_{i=1}^p(|D_i|-1),
\]
where $p$ denotes the number of connected components of $D_G$ with order at least three.
Note that
\[
\alpha(G)=\mu(G)=|A_G|+\frac{1}{2}\sum_{i=1}^p(|D_i|-1).
\]
Hence we have $|I(G)\cap V(D_i)|=\frac{1}{2}(|D_i|-1)$ and so $I(G)\cap V(D_i)$ is a maximum independent set of $D_i$. Moreover, one can see that
$E_G(I(G)\cap V(D_i), A_G)=\emptyset$ since $A_G\subseteq I(G)$. This completes the proof.
\end{proof}






\end{document}